\newcommand{\F}{\mathbb {F}}
\newtheorem{theorem}{Theorem}[section]
\newtheorem{definition}[theorem]{Definition}
\newtheorem{lemma}[theorem]{Lemma}
\newtheorem{proposition}[theorem]{Proposition}
\newtheorem{remark}[theorem]{Remark}
\def \ord {\operatorname{ord}}
\def \Ord {\operatorname{Ord}}
\begin{document}

\title[Existence of primitive $k$-normal elements for critical values over finite fields]{Existence of primitive $k$-normal elements for critical values over finite fields}

\author{Victor G.L. Neumann, Josimar J. R. Aguirre and Sarah F. M. Mazzini}

\maketitle

\vspace{1ex}
\small{Instituto de Matem\'{a}tica e Estat\'{i}stica, Universidade Federal de Uberl\^{a}ndia, 
	Av. J. N. \'{A}vila 2121, 38.408-902 Uberl\^{a}ndia -MG, Brazil }

\vspace{8ex}
\noindent
\textbf{Keywords:} Finite fields, normal elements, $k$-normal elements.\\
\noindent
\textbf{MSC:} 12E20, 11T30.

\begin{abstract}
Let $\mathbb{F}_{q^n}$ be a finite field with $q^n$ elements. An element $\alpha \in \mathbb{F}_{q^n}$ is called $k$-normal over $\mathbb{F}_q$ if $\alpha$ and its conjugates generate a vector subspace of $\mathbb{F}_{q^n}$ of dimension $n-k$ over $\mathbb{F}_q$. The existence of primitive $k$-normal elements and related properties have been studied throughout the past few years for $k > n/2$.
In this paper, we provide general results on the existence of primitive $k$-normal elements for the critical value $k = n/2$, which have not been studied until now, except for $n = 4$.
Furthermore, we show the strength of this result by providing a complete characterization of the existence of primitive $3$-normal elements in $\mathbb{F}_{q^6}$ over $\mathbb{F}_q$.
\end{abstract}

\maketitle

\section{Introduction}
Let $\mathbb{F}_{q^n}$ be a finite field with $q^n$ elements, where $q$ 
is a prime power and $n$ is a positive integer.
The field $\mathbb{F}_{q^n}$ has interesting structures related to the two basic field operations.
Regarding the multiplicative structure, $\mathbb{F}_{q^n}^*$ is cyclic and any generator $\alpha \in \mathbb{F}_{q^n}^*$ is called \textit{primitive element}.
On the other hand, in the additive structure, we can consider $\mathbb{F}_{q^n}$ as an $\mathbb{F}_q$-vector space. 
If for an element $\alpha \in \mathbb{F}_{q^n}$ the set $B_{\alpha} = \{ \alpha^{q^i} \mid 0 \leq i \leq n-1 \}$ forms an $\mathbb{F}_q$-basis of $\mathbb{F}_{q^n}$, then $B_\alpha$ is called a normal basis and $\alpha$ is called a \textit{normal element} over $\mathbb{F}_q$.

Normal bases are frequently used in cryptography and computer algebra systems
due to their efficient exponentiation properties
(see \cite{gao} for an overview on normal elements and their applications).
Primitive
elements are constantly used in cryptographic applications, such as in the discrete
logarithm problem (see \cite{diffie}).

By combining these two properties, namely \textit{primitive normal element}, we can study the multiplicative
structure of $\mathbb{F}_{q^n}$ while simultaneously viewing $\mathbb{F}_{q^n}$ as a vector space over $\mathbb{F}_q$. The \textit{Primitive Normal Basis Theorem} states that for any extension field $\mathbb{F}_{q^n}$ of $\mathbb{F}_q$, there exists a basis composed of primitive normal elements. This result was first proved by Lenstra and Schoof \cite{lenstra} using a combination of additive and multiplicative character sums, sieving results, and a computer search.

In $2013$, Huczynska et al. \cite{knormal} introduced the concept of $k$-normal elements as an extension of the usual definition of normal elements.

\begin{definition}
Let $\alpha \in \mathbb{F}_{q^n}$. We say that $\alpha$ is a $k$-normal element of $\mathbb{F}_{q^n}$ over $\mathbb{F}_q$
if the set $B_{\alpha} = \{ \alpha^{q^i} \mid 0 \leq i \leq n-1 \}$
generates a vector subspace of $\mathbb{F}_{q^n}$ of dimension $n-k$ over $\mathbb{F}_q$.
\end{definition}


From the above definition, elements which are normal in the usual sense are $0$-normal 
and from the Primitive Normal Basis Theorem, we know that primitive $0$-normal elements always exist \cite{Nied}.
This definition opened a new line of research on the existence of primitive $k$-normal elements (see \cite{lucas1}, \cite{AN} for results in the cases $k=1$ and $k=2$, respectively).

The general case of the existence of primitive $k$-normal elements is discussed in \cite{lucas}. Although the number of works in this line of research is extensive (see, for example, \cite{rani} and the references therein), in all cases, $k$-normality is only discussed for $k>\frac{n}{2}$ due to an inequality 
requiring $\frac{n}{2} - k$ to be positive.

In \cite{AN}, the authors determine the prime powers $q$ for which $\mathbb{F}_{q^n}$ has
primitive $2$-normal elements over $\mathbb{F}_q$. The special case $n=4$ is treated differently and motivated this work, in which we generalize the ideas used in \cite{AN}  and discuss the case $k=\frac{n}{2}$, obtaining algebraic and analytic conditions for the existence of primitive $k$-normal elements in $\mathbb{F}_{q^{n}}$ over $\mathbb{F}_q$. In particular, we give a complete answer for the case $k=3$.

The conditions for the existence of primitive $k$-normal elements depend on some estimates involving character sums, whose upper bounds are of the order $O\left( q^{n/2} \right)$, where the sums are taken over all elements of the finite field $\mathbb{F}_{q^n}$. These conditions are as follows (see \cite{AN}, \cite{lucas}).


\begin{theorem}\label{teo-exist}
Let $f \in \mathbb{F}_q[x]$ be a divisor of $x^n-1$ of degree $k$. If
$$
q^{n/2-k} \geq W(q^n-1)W(x^n-1),
$$
then there exist primitive $k$-normal elements in $\mathbb{F}_{q^n}$, where $W(t)$ denotes the number 
of square-free (monic) divisors of $t$, for $t$ being either a positive integer or a monic polynomial over $\mathbb{F}_q$.
\end{theorem}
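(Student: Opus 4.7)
The plan is to count, via the classical character-sum method, the number $N_f$ of elements $\alpha \in \mathbb{F}_{q^n}^*$ that are simultaneously primitive in $\mathbb{F}_{q^n}^*$ and have $\mathbb{F}_q$-order (with respect to the $\mathbb{F}_q[x]$-module structure in which $x$ acts as the Frobenius) exactly $g := (x^n-1)/f$. Since $\deg g = n-k$, every such $\alpha$ is primitive $k$-normal, so establishing $N_f > 0$ under the stated hypothesis proves the theorem.

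First, I would encode each of the two constraints as a character-theoretic indicator function. Primitivity is captured by the standard M\"obius--character formula
\[
\omega(\alpha) = \frac{\phi(q^n-1)}{q^n-1} \sum_{d \mid q^n-1} \frac{\mu(d)}{\phi(d)} \sum_{\chi_d} \chi_d(\alpha),
\]
with $\chi_d$ ranging over the multiplicative characters of $\mathbb{F}_{q^n}^*$ of order $d$. Using the polynomial analogues $\Phi_q$ and $\mu_q$ of Euler's totient and of the M\"obius function on monic divisors of $x^n-1$, a parallel indicator $\Omega_g(\alpha)$ detecting exact $\mathbb{F}_q$-order $g$ can be built, expressed as a weighted sum over divisors $h \mid g$ of natural families of additive characters of $\mathbb{F}_{q^n}$ associated to $h$.

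Next, I would substitute these two expressions into $N_f = \sum_{\alpha \in \mathbb{F}_{q^n}^*} \omega(\alpha)\,\Omega_g(\alpha)$ and interchange the order of summation, producing a double sum indexed by pairs $(d,h)$. The trivial pair $(d,h) = (1,1)$ yields a positive main term $M$ of order $q^{n-k}$, corresponding to the expected number of primitive elements of order exactly $g$. Every other pair carries a nontrivial combined character on $\mathbb{F}_{q^n}$, so by Weil's bound for hybrid multiplicative--additive character sums each inner sum is bounded in modulus by $q^{n/2}$. Since only square-free divisors contribute through the $\mu$- and $\mu_q$-supports, the number of nontrivial pairs is at most $W(q^n-1)\,W(g) - 1 \leq W(q^n-1)\,W(x^n-1) - 1$.

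Collecting the main term and the error estimate gives a lower bound of the shape
\[
N_f \geq M\,\bigl(1 - W(q^n-1)\,W(x^n-1)\, q^{k - n/2}\bigr),
\]
which is strictly positive precisely when the stated inequality $q^{n/2-k} \geq W(q^n-1)\,W(x^n-1)$ holds. The principal obstacle is purely organizational: normalizing $\Omega_g$ correctly so that the main term retains the factor $q^{n-k}$ (rather than $q^n$), which is what produces the critical exponent $n/2 - k$ in the final comparison instead of the weaker $n/2$. The analytic input --- Weil's estimate for multiplicative--additive character sums on $\mathbb{F}_{q^n}$ --- is entirely standard.
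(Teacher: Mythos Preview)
The paper does not actually prove this theorem: it is stated in the introduction as a known result, with a reference to \cite{AN} and \cite{lucas}, and is then used only to motivate why the case $n=2k$ is ``critical''. So there is no in-paper proof to compare against.

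That said, your sketch is precisely the standard character-sum argument from the cited references and is correct in outline. One writes the primitivity indicator via multiplicative characters and the condition $\Ord(\alpha)=g$ via additive characters (with the polynomial M\"obius function $\mu_q$), expands $N_f$, and bounds each nontrivial hybrid sum by $q^{n/2}$ using the Gauss-sum/Weil estimate; the support of $\mu\cdot\mu_q$ restricts the number of such terms to at most $W(q^n-1)W(x^n-1)$. The only cosmetic difference from the version in \cite{lucas} is that there one usually counts normal $\beta\in\mathbb{F}_{q^n}$ with $L_f(\beta)$ primitive (main term $\asymp q^n$, error $\asymp q^{n/2+k}$), whereas you count directly the primitive elements of exact $\mathbb{F}_q$-order $g$ (main term $\asymp q^{n-k}$, error $\asymp q^{n/2}$); after dividing through, both yield the same sufficient condition $q^{n/2-k}\ge W(q^n-1)W(x^n-1)$. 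Your remark that the delicate point is normalizing $\Omega_g$ so that the main term carries the factor $q^{n-k}$ is exactly right, and is what makes the exponent $n/2-k$ rather than $n/2$ appear.
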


In the above theorem, it is clear that the result cannot be applied when $n=2k$. This case yields what we call \textit{critical values} for $n$ and $k$. Thus, we need a new approach to deal with this case, which will be shown throughout this paper as follows: In Section \ref{preliminares}, we provide some background material. In Section \ref{cv}, we present general conditions for the existence of primitive $k$-normal elements in $\F_{q^n}$ over $\F_q$ for the critical values, generalizing the approach used in \cite{AN}. Finally, in Section \ref{pc}, we apply the results from previous sections to provide a complete characterization of the existence of primitive $3$-normal elements in $\mathbb{F}_{q^6}$ over $\mathbb{F}_q$.
In the Appendix, we show the SageMath procedures used in this paper.
%
%
%

\section{Preliminaries}\label{preliminares}

Throughout this paper, $q$ denotes a prime power, $\mathbb{F}_q$ denotes the finite field with $q$ elements, and $\mathbb{F}_{q^n}$ denotes its extension of degree $n$.

In this section, we present definitions and results that will be useful in subsequent sections.

\subsection{Linearized polynomials and the $\mathbb{F}_q$-order}

Here we present some definitions and basic results on linearized polynomials over finite fields that are frequently used in this paper. 

\begin{definition}
Let $f \in \mathbb{F}_q[x]$ a polynomial of the form $f(x)=\sum_{i=0}^r a_ix^i$.
\begin{enumerate}
\item[(a)] The polynomial $L_f(x)=\sum_{i=0}^r a_ix^{q^i}$ is the 
linearized $q$-associate of $f$.
\item[(b)] For $\alpha \in \mathbb{F}_{q^n}$, we set 
$L_f(\alpha)= \sum_{i=0}^r a_i \alpha^{q^i}$.
\end{enumerate}
\end{definition}

The polynomial 
$L_f$ 
can be viewed as a linear transformation $L_f: \mathbb{F}_{q^n} \longrightarrow \mathbb{F}_{q^n}$ over $\F_q$ that also has additional properties.

\begin{lemma}[\cite{Nied}, Lemma 3.59]
Let $f,g \in \mathbb{F}_q[x]$. The following hold:
\begin{enumerate}
\item[(a)] $L_f(x)+L_g(x)=L_{f+g}(x)$.
\item[(b)] $L_{fg}(x)=L_f \left( L_g(x) \right) = L_g \left( L_f(x) \right)$.
\end{enumerate}
\end{lemma}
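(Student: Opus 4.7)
The plan is to prove both parts by essentially unpacking the definition, with part (b) requiring a small but essential use of the fact that coefficients lie in $\mathbb{F}_q$.

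For part (a), I would simply write $f(x)=\sum_{i=0}^r a_i x^i$ and $g(x)=\sum_{i=0}^s b_i x^i$, pad with zero coefficients so the two sums range over the same index set, and then compute
\[
L_{f+g}(x)=\sum_i (a_i+b_i)x^{q^i}=\sum_i a_i x^{q^i}+\sum_i b_i x^{q^i}=L_f(x)+L_g(x).
\]
This is purely formal and needs no hypothesis beyond $f,g\in\mathbb{F}_q[x]$.

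For part (b), my plan is first to verify the identity on the monomial basis and then extend by $\mathbb{F}_q$-linearity using part (a). Concretely, for $f(x)=x^i$ and $g(x)=x^j$ one has $L_{x^j}(x)=x^{q^j}$, and then
\[
L_{x^i}\bigl(L_{x^j}(x)\bigr)=L_{x^i}(x^{q^j})=(x^{q^j})^{q^i}=x^{q^{i+j}}=L_{x^{i+j}}(x)=L_{x^i\cdot x^j}(x).
\]
Writing general $f=\sum a_i x^i$ and $g=\sum b_j x^j$, the product is $fg=\sum_{i,j} a_i b_j x^{i+j}$, and I would expand $L_f(L_g(x))$ by repeatedly invoking the additivity from part (a) to push the sum outside. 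The step I expect to require care is justifying
\[
L_f\bigl(\textstyle\sum_j b_j x^{q^j}\bigr)=\sum_j b_j\, L_f(x^{q^j}),
\]
which uses the $\mathbb{F}_q$-linearity of $L_f$ on the field $\mathbb{F}_{q^n}$; crucially, when $L_f$ is applied to a monomial $x^{q^j}$, the term $a_i (x^{q^j})^{q^i}=a_i x^{q^{i+j}}$ requires no intervention of Frobenius on $a_i$, but when the arguments of $L_f$ are not monomials we need $a_i^{q^k}=a_i$ for all $k$, which holds precisely because $a_i\in\mathbb{F}_q$. After this, combining terms with matching power $q^{i+j}$ gives exactly $L_{fg}(x)$.

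The symmetric identity $L_{fg}(x)=L_g(L_f(x))$ then follows either by the same computation with the roles of $f$ and $g$ swapped, or more elegantly from the commutativity $fg=gf$ in $\mathbb{F}_q[x]$ together with the already-established first equality. The main (mild) obstacle is keeping the bookkeeping between ordinary multiplication in $\mathbb{F}_q[x]$ and composition of linearized maps transparent; this is where restricting coefficients to $\mathbb{F}_q$ (so that Frobenius fixes them) quietly does the work.
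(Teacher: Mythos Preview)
Your argument is correct and is the standard direct computation. The paper itself does not prove this lemma; it is simply quoted from Lidl--Niederreiter as background, so there is no in-paper proof to compare against.

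One small correction to your write-up of part (b): in justifying
\[
L_f\Bigl(\sum_j b_j x^{q^j}\Bigr)=\sum_j b_j\, L_f\bigl(x^{q^j}\bigr),
\]
the coefficients that must be Frobenius-fixed are the $b_j$, not the $a_i$. Indeed, $L_f(b_j y)=\sum_i a_i (b_j y)^{q^i}=\sum_i a_i\, b_j^{q^i} y^{q^i}$, and this collapses to $b_j\, L_f(y)$ precisely because $b_j^{q^i}=b_j$ for $b_j\in\mathbb{F}_q$; the $a_i$ are never raised to a $q$-th power in this step. The hypothesis $a_i\in\mathbb{F}_q$ is what is needed for the symmetric identity $L_g(L_f(x))=L_{gf}(x)$. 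This is only a labeling slip; your overall strategy is sound.
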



\begin{definition}\label{Order}
Let $g \in \F_q[x]$ be a monic polynomial.
We say that an element $\alpha \in \F_{q^n}$ has $\F_q$-order $g$
if $g$ is the lowest degree monic polynomial such that $L_g(\alpha)=0$. In this case, we denote it
by $g=\text{Ord}(\alpha)$.
\end{definition}
It is known that the $\F_q$-order of an element $\alpha \in \F_{q^n}$
divides $x^n-1$.
We also have the following equivalences. 

\begin{theorem}[\cite{knormal}, Theorem 3.2]\label{equiv-knormal}
 Let $\alpha \in \mathbb{F}_{q^n}$. The following three properties are equivalent:
\begin{enumerate}
\item[(i)] $\alpha$ is $k$-normal over $\F_q$.
\item[(ii)] If $V_{\alpha}$ is the $\F_q$-vector space generated by $\{ \alpha, \alpha^q, \ldots, \alpha^{q^{n-1}} \}$, then $\dim V_{\alpha}$ is $n-k$.
\item[(iii)] $\alpha$ has $\mathbb{F}_q$-order of degree $n-k$.
\end{enumerate} 
\end{theorem}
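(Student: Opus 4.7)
The equivalence of (i) and (ii) is essentially a restatement of the definition of $k$-normal given just above: the property in (i) is defined as the condition that the set $B_\alpha$ spans an $\mathbb{F}_q$-subspace of dimension $n-k$, and the span of $B_\alpha$ is by definition $V_\alpha$. So I would dispose of this equivalence in one sentence and concentrate the work on (ii) $\Leftrightarrow$ (iii).

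The key idea for (ii) $\Leftrightarrow$ (iii) is to view $\mathbb{F}_{q^n}$ as an $\mathbb{F}_q[x]$-module where the indeterminate $x$ acts by the Frobenius $\beta \mapsto \beta^q$. With this structure, $f(x) \cdot \alpha = L_f(\alpha)$ for every $f \in \mathbb{F}_q[x]$. I would then introduce the evaluation map
\[
\phi_\alpha : \mathbb{F}_q[x] \longrightarrow \mathbb{F}_{q^n}, \qquad \phi_\alpha(f) = L_f(\alpha),
\]
which is $\mathbb{F}_q$-linear and whose kernel is precisely the ideal of polynomials annihilating $\alpha$. By Definition \ref{Order}, this ideal is generated by $\mathrm{Ord}(\alpha)$.

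The next step is to identify the image of $\phi_\alpha$ with $V_\alpha$. Since $L_{x^i}(\alpha) = \alpha^{q^i}$, the image contains every element of $B_\alpha$; conversely, every $L_f(\alpha)$ is an $\mathbb{F}_q$-linear combination of $\alpha, \alpha^q, \alpha^{q^2}, \dots$, and using the fact that $x^n - 1$ annihilates every element of $\mathbb{F}_{q^n}$ (so powers $q^i$ with $i \geq n$ reduce), the image is exactly $V_\alpha$. The first isomorphism theorem then yields
\[
V_\alpha \;\cong\; \mathbb{F}_q[x]\big/\bigl(\mathrm{Ord}(\alpha)\bigr)
\]
as $\mathbb{F}_q$-vector spaces, so $\dim_{\mathbb{F}_q} V_\alpha = \deg \mathrm{Ord}(\alpha)$. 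Consequently $\dim V_\alpha = n - k$ if and only if $\deg \mathrm{Ord}(\alpha) = n - k$, which closes the equivalence.

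There is no real obstacle here; the only mild technicality is verifying that one can truncate the image at exponents below $q^n$, which follows from the divisibility $\mathrm{Ord}(\alpha) \mid x^n - 1$ already noted in the text. The cleanness of the argument is why the $\mathbb{F}_q[x]$-module viewpoint is worth setting up explicitly: once this cyclic-module identification is in place, both implications are formal consequences of it.
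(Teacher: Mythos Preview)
The paper does not supply its own proof of this theorem; it is quoted from \cite{knormal} (Theorem~3.2 there) and stated without argument, so there is no in-paper proof to compare against. Your proof is correct: the equivalence (i)$\Leftrightarrow$(ii) is indeed immediate from the definition of $k$-normal given in the paper, and your treatment of (ii)$\Leftrightarrow$(iii) via the $\mathbb{F}_q[x]$-module structure and the evaluation map $\phi_\alpha$ is the standard and clean way to establish $\dim_{\mathbb{F}_q} V_\alpha = \deg \mathrm{Ord}(\alpha)$. The only point worth tightening in a written version is the sentence about ``truncating at exponents below $q^n$'': rather than invoking $\mathrm{Ord}(\alpha)\mid x^n-1$ after the fact, it is slightly cleaner to note up front that $L_{x^n-1}(\alpha)=\alpha^{q^n}-\alpha=0$ for every $\alpha\in\mathbb{F}_{q^n}$, which both shows that $\mathrm{Ord}(\alpha)$ exists and immediately gives $\mathrm{im}\,\phi_\alpha = V_\alpha$.
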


\subsection{Freeness and Characters.}

The concept of \textit{freeness} is useful in the construction of certain characteristic functions over finite fields. This concept was introduced in Carlitz \cite{carlitz} and Davenport \cite{davenport1},
and refined in Lenstra and Schoof \cite{lenstra}. 

\begin{definition}
Let $m$ be a divisor of $q^n-1$. An element $\alpha \in \mathbb{F}_{q^n}^*$ is called $m$-free if for every divisor $d$ of $m$ with $d > 1$, $\alpha$ is not a $d$-th power in $\mathbb{F}_{q^n}^*$. 
\end{definition}


It is well known that an element $ \alpha \in \mathbb{F}_{q^n}^*$ is primitive if and only if $\alpha$ is $(q^n-1)$-free.


Consider the multiplicative structure of $\mathbb{F}_{q^n}$. Let $m$ be a divisor of $q^n-1$ and define
\[
\int \limits_{d|m} \eta_d = \sum_{d|m} \frac{\mu(d)}{\varphi(d)} \sum_{(d)} \eta_d,
\]
where $\eta_d$ is a multiplicative character of $\F_{q^n}$, and the sum
$ \sum_{(d)} \eta_d$ runs over all multiplicative characters of order $d$.
It is known that there exist $\varphi(d)$ of these characters.

\begin{proposition}\label{caracteristica-mlivre}
Let $m$ be a divisor of $q^n-1$, and $\theta(m)= \dfrac{\varphi(m)}{m}$.
For any $\alpha \in \F_{q^n}$ we have
 $$w_m(\alpha) = 
\theta(m) \int_{d|m} \eta_d(\alpha)
=\left\{
\begin{array}{ll}
1 \quad & \text{if } \alpha \text{ is } m\text{-free}, \\
\theta(m) & \text{if } \alpha=0 ,\\
0            & \text{otherwise.}
\end{array}
\right.
$$
\end{proposition}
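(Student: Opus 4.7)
The plan is to verify the three cases of the definition of $w_m(\alpha)$ separately, relying only on the finite orthogonality relations for multiplicative characters of the cyclic group $\mathbb{F}_{q^n}^*$; no analytic input is needed.

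The case $\alpha = 0$ will follow from the usual convention that the trivial multiplicative character $\eta_1$ satisfies $\eta_1(0) = 1$ while every $\eta_d$ with $d > 1$ vanishes at $0$. Only the $d = 1$ term of the integral survives, giving $\int_{d\mid m} \eta_d(0) = \mu(1)/\varphi(1) = 1$, hence $w_m(0) = \theta(m)$.

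For $\alpha \in \mathbb{F}_{q^n}^*$ I would first restrict the sum to squarefree $d \mid m$, since $\mu(d) = 0$ otherwise, and then exploit that the characters of squarefree order $d = p_1 \cdots p_r$ of the cyclic group $\mathbb{F}_{q^n}^*$ factor uniquely as products $\chi_{p_1} \cdots \chi_{p_r}$, with $\chi_{p_i}$ of order $p_i$. Writing $\Sigma_p(\alpha) := \sum_{\chi \text{ of order } p} \chi(\alpha)$, the inner sum factors as $\sum_{(d)} \eta_d(\alpha) = \prod_{p \mid d} \Sigma_p(\alpha)$, and since the squarefree divisors of $m$ correspond to subsets of its prime divisors, one obtains
\[
\int_{d\mid m}\eta_d(\alpha) \;=\; \prod_{p\mid m}\left(1 - \frac{\Sigma_p(\alpha)}{p-1}\right).
\]

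It remains to evaluate $\Sigma_p(\alpha)$ for a single prime $p \mid q^n - 1$. The $p$ characters of order dividing $p$ are precisely those trivial on the subgroup $H_p$ of $p$-th powers of $\mathbb{F}_{q^n}^*$, so orthogonality gives $1 + \Sigma_p(\alpha) = p \cdot \mathbf{1}_{\alpha \in H_p}$; that is, $\Sigma_p(\alpha) = p - 1$ when $\alpha$ is a $p$-th power and $-1$ otherwise. The corresponding local factor $1 - \Sigma_p(\alpha)/(p - 1)$ equals $0$ in the first case and $p/(p - 1)$ in the second, so the product vanishes unless $\alpha$ is not a $p$-th power for any $p \mid m$ (i.e.\ unless $\alpha$ is $m$-free) and otherwise equals $\prod_{p \mid m} p/(p - 1) = 1/\theta(m)$. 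Multiplying by $\theta(m) = \prod_{p \mid m}(p-1)/p$ delivers the three-valued function of the statement. The only bookkeeping hurdle I expect is the factorization of squarefree-order characters into prime-order characters, which is a standard consequence of the cyclicity of $\mathbb{F}_{q^n}^*$; once that is in place the remainder reduces to independent local orthogonality computations at each prime dividing $m$.
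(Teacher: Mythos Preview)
Your argument is correct. The factorization of the Vinogradov-style integral into an Euler product over primes dividing $m$, the evaluation of each local factor via orthogonality on the order-$p$ character group, and the identification of ``not a $p$-th power for any $p\mid m$'' with $m$-freeness are all standard and valid; the only delicate point is the convention $\eta_1(0)=1$, $\eta_d(0)=0$ for $d>1$, which you invoke explicitly and which is consistent with the value $w_m(0)=\theta(m)$ claimed in the statement.

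The paper, however, does not prove this proposition at all: its proof consists solely of the citation ``See \cite[section~5.2]{knormal} or \cite[Theorem~2.15]{lucas}.'' So there is no in-paper argument to compare against. What you have supplied is a clean self-contained proof of a result the authors treat as background; it is in fact essentially the classical argument (going back to Vinogradov/Carlitz/Cohen) that one finds behind those citations.
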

\begin{proof}
See \cite[section 5.2]{knormal} or \cite[Theorem 2.15]{lucas}.
\end{proof}

To finish this section, we present an estimate that is used in the next section.

\begin{lemma}[\cite{katz}, Theorem 1]\label{cotaenFq}
Let $F$ be a finite field,
$n \geq 1$ be an integer and $E$ be an extension field of $F$ of degree $n$.
Let $\chi$ be any nontrivial complex-valued multiplicative character of $E^{\times}$ (extended by zero to all of E ), and $x \in E$ any element that generates $E$
over $F$. Then
$$
\left| \sum \limits_{t \in F} \chi(t-x) \right| \leq (n-1) \sqrt{\# (F)}.
$$

\end{lemma}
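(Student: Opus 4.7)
The plan is to invoke the $\ell$-adic cohomological machinery behind the Weil conjectures. The sum has the non-standard feature that a character of the \emph{larger} field $E$ is evaluated on a translate of the subfield $F$, so the classical Weil bound $\left|\sum_{t \in F} \psi(g(t))\right| \leq (\deg g - 1)\sqrt{|F|}$ for $\psi$ a character of $F^*$ and $g \in F[T]$ does not directly apply.

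First, I would dispose of the trivial case $n = 1$ and assume $n \geq 2$, so that $x \notin F$ and $t - x \neq 0$ for every $t \in F$. Fix a prime $\ell \neq \mathrm{char}(F)$ and let $\mathcal{L}_\chi$ denote the Kummer rank-one lisse $\overline{\mathbb{Q}}_\ell$-sheaf on $\mathbb{G}_{m,E}$ attached to $\chi$, pure of weight $0$. Its pullback $\mathcal{F}$ along the morphism $t \mapsto t - x$ is a lisse rank-one sheaf on $\mathbb{A}^1_F$ (viewed via the closed immersion $\mathbb{A}^1_F \hookrightarrow \mathbb{A}^1_E$). The Grothendieck--Lefschetz trace formula, combined with the vanishing of $H^0_c$ and $H^2_c$ for a nontrivial such sheaf, yields
$$
\sum_{t \in F} \chi(t - x) \;=\; -\mathrm{Tr}\bigl(\mathrm{Frob}_F \,\big|\, H^1_c(\mathbb{A}^1_{\overline F}, \mathcal{F})\bigr),
$$
and Deligne's Weil II then bounds each eigenvalue of $\mathrm{Frob}_F$ on $H^1_c$ by $\sqrt{|F|}$.

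The main obstacle is the dimension bound $\dim H^1_c(\mathbb{A}^1_{\overline F}, \mathcal{F}) \leq n - 1$, which I would extract from the Euler--Poincar\'e formula on $\mathbb{P}^1$. Since $\mathcal{L}_\chi$ is tame, so is $\mathcal{F}$; thus the Swan conductor at $\infty$ vanishes and $\dim H^1_c$ equals the drop of the tame part of the local monodromy at $\infty$. The hypothesis that $x$ generates $E$ over $F$ is precisely what forces the induced tame character at $\infty$ to be nontrivial of order dividing $n$, yielding the sharp bound. I do not see a purely elementary route that avoids this cohomological input for an arbitrary nontrivial $\chi$, so the technical heart of the argument lies in this local monodromy calculation at $\infty$.
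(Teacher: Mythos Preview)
The paper does not prove this lemma; it is simply quoted from Katz's 1989 paper and used as a black box. So there is no ``paper's own proof'' to compare against. Your overall strategy---Grothendieck--Lefschetz trace formula, vanishing of $H^0_c$ and $H^2_c$, and Deligne's Weil~II to bound Frobenius eigenvalues on $H^1_c$---is exactly Katz's, and it is the right one.

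However, your execution has a genuine gap in the geometry of the sheaf $\mathcal{F}$. The pullback of $\mathcal{L}_\chi$ along $t\mapsto t-x$ is \emph{not} lisse on all of $\mathbb{A}^1$ over $\overline{F}$: geometrically it is singular precisely at the $n$ Galois conjugates $x_1,\dots,x_n$ of $x$. (Over $F$ these form a single closed point of degree $n$; the phrase ``closed immersion $\mathbb{A}^1_F\hookrightarrow\mathbb{A}^1_E$'' is not the correct way to set this up---one should work with the Weil restriction $\mathrm{Res}_{E/F}\mathbb{G}_m$ or, equivalently, pass to $\overline{F}$ and track the $\mathrm{Frob}_F$-structure.) In fact your setup is internally inconsistent: a tame lisse rank-one sheaf on $\mathbb{A}^1_{\overline{F}}$ is automatically geometrically constant, since the tame fundamental group of $\mathbb{A}^1$ is trivial, so it cannot be both nontrivial and tame as you claim.

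Consequently your mechanism for the bound $\dim H^1_c\le n-1$ is misidentified. It does not come from ``the drop of the tame monodromy at $\infty$'' (which for a rank-one sheaf is at most $1$). The correct Euler--Poincar\'e count is on $U=\mathbb{A}^1\setminus\{x_1,\dots,x_n\}$: tameness gives $\chi_c(U,\mathcal{F})=\mathrm{rank}(\mathcal{F})\cdot\chi_c(U)=1\cdot(1-n)$, and nontriviality of $\chi$ forces $H^0_c=H^2_c=0$, whence $\dim H^1_c=n-1$. The hypothesis that $x$ generates $E$ over $F$ enters precisely here: it guarantees that the $n$ conjugates $x_i$ are distinct, so that there really are $n$ finite punctures, and it rules out the degenerate situation where $t-x$ lies in a proper subfield on which $\chi$ might restrict trivially. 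Once you correct the lisse locus and rerun the Euler characteristic on $U$ rather than on $\mathbb{A}^1$, the argument goes through.
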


\section{Existence conditions for critical values}\label{cv}

Throughout this section, $k$ is a positive integer, $n=2k$ and
$\alpha \in \mathbb{F}_{q^n}$ is a normal element over $\mathbb{F}_{q}$. 

\begin{proposition}\label{prop1}
Let $g \in \mathbb{F}_q[x]$ be a monic divisor of $x^n-1$ with $\deg g=k$, and consider $\beta=L_f(\alpha) \in \mathbb{F}_{q^n}$ where $f=\frac{x^n-1}{g}$.
If $x-1$ divides $g$ then there exists $u_0 \in \mathbb{F}_q$ such that
$\Ord(\beta + u) =g$
for all $u \in \mathbb{F}_q \setminus \{u_0\}$. 
\end{proposition}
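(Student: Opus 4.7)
The plan is first to show that $\beta$ itself has $\mathbb{F}_q$-order exactly $g$, and then to analyze how that order can drop under the translation $\beta \mapsto \beta + u$. The assumption $x-1 \mid g$ is what allows the order to drop at all (since elements of $\mathbb{F}_q$ have $\mathbb{F}_q$-order dividing $x-1$), but, as we will see, it also confines every possible drop to a single irreducible factor of $g$, which is the mechanism producing the unique exceptional value $u_0$.

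The first step computes
\[
L_g(\beta) = L_g\bigl(L_f(\alpha)\bigr) = L_{gf}(\alpha) = L_{x^n-1}(\alpha) = \alpha^{q^n} - \alpha = 0,
\]
so $\Ord(\beta) \mid g$. If $\Ord(\beta) = g' \subsetneq g$, then $L_{g'f}(\alpha) = 0$ and the normality of $\alpha$ gives $x^n-1 = gf \mid g'f$, i.e.\ $g \mid g'$, a contradiction. Hence $\Ord(\beta) = g$. Since every $u \in \mathbb{F}_q$ satisfies $u^q - u = 0$ and $x-1 \mid g$, we also get $\Ord(\beta + u) \mid \operatorname{lcm}(g, x-1) = g$ for all $u \in \mathbb{F}_q$.

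Next I would characterize when the order drops. By Definition \ref{Order}, $\Ord(\beta + u) = g$ iff $L_h(\beta + u) \neq 0$ for every maximal proper divisor $h = g/p$, where $p$ runs over the distinct monic irreducible factors of $g$. For $u \in \mathbb{F}_q$ we have $u^{q^i} = u$, so
\[
L_h(\beta + u) = L_h(\beta) + h(1)\, u,
\]
and $L_h(\beta) \neq 0$ because $\Ord(\beta) = g$. The decisive observation is: if $p \neq x-1$, then $x-1$ still divides $h = g/p$, forcing $h(1) = 0$, so $L_h(\beta + u) = L_h(\beta) \neq 0$ for \emph{every} $u \in \mathbb{F}_q$; no such factor contributes any bad translate.

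Only $p = x-1$ remains. If $h := g/(x-1)$ satisfies $h(1) = 0$ (which happens exactly when $(x-1)^2 \mid g$), then again $L_h(\beta+u) \neq 0$ for every $u$ and no exceptional value is needed. If $h(1) \neq 0$, then $L_h(\beta+u) = 0$ has the unique solution $u_0 = -L_h(\beta)/h(1) \in \mathbb{F}_q$, and $\Ord(\beta+u) = g$ for all $u \in \mathbb{F}_q \setminus \{u_0\}$. Combining the cases yields the proposition. The main point requiring some care is the possibility $(x-1)^2 \mid g$ (which can occur when $\operatorname{char}(\mathbb{F}_q) \mid n$), but that case turns out to be strictly easier, not harder, since there are then no exceptional translates at all.
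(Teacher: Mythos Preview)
Your proof is correct and follows essentially the same approach as the paper: both show $\Ord(\beta)=g$, observe that $L_h(\beta+u)=L_h(\beta)+h(1)u$, and isolate the factor $x-1$ as the only place a translate can go wrong. The organization differs slightly---the paper writes $g=(x-1)^m h$ with $(x-1)\nmid h$ and splits into the cases $m\ge 2$ and $m=1$, whereas you run over all maximal proper divisors $g/p$ and note that $h(1)=0$ automatically unless $p=x-1$---but the content is the same. One tiny point worth making explicit: when you write $u_0=-L_h(\beta)/h(1)\in\mathbb{F}_q$, this holds because $L_g(\beta)=0$ gives $L_{x-1}(L_h(\beta))=0$, i.e.\ $L_h(\beta)\in\mathbb{F}_q$; the paper glosses over this as well.
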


\begin{proof}
Note that, from Theorem \ref{equiv-knormal}, $\beta=L_f(\alpha)$ is a $k$-normal element over $\mathbb{F}_q$, and we need to find an element $u_0 \in \mathbb{F}_q$ such that $\beta+u$ is also a $k$-normal element for all $u \in \mathbb{F}_q \setminus \{u_0\}$.

Since $(x-1) \mid g$, thus $L_g(u)=0$ and $L_g(\beta+u)=0$, which implies that $\Ord(\beta+u) \mid g$. Suppose that $g=(x-1)^m \cdot h$, with $(x-1) \nmid h$. We analyze two cases: First, if $m \geq 2$, we have
$L_{(x-1)^{m-1}h}(\beta+u)=L_{(x-1)^{m-1}h}(\beta) \neq 0$, implying $\Ord(\beta+u)=(x-1)^m \cdot h_1$, with $h_1 \mid h$. But if $h_1 \neq h$, the degree of $\Ord(\beta)$ is less than $k$, since $L_{(x-1)^m \cdot h_1}(\beta)=L_{(x-1)^m \cdot h_1}(\beta+u)=0$, which is a contradiction. Therefore, $\text{ Ord}(\beta+u)=g$ and $\beta+u$ is $k$-normal for all $u \in \mathbb{F}_q$.

Now we consider $m=1$. In this case, we have $g=(x-1) \cdot h$, with $(x-1) \nmid h$. Since $h(1) \neq 0$, then
$L_h(\beta+u)=L_h(\beta)+L_h(u) = L_h(\beta)+h(1)u\neq 0$ for all $ \in u \in \mathbb{F}_{q^n} \setminus \{u_0\}$ where
$u_0= -h(1)^{-1} L_h(\beta)$.
Therefore, $(x-1) \mid \Ord (\beta+u)$ and, analogously to the previous case, $\Ord(\beta+u)=g$ for all $u \in \mathbb{F}_{q^n} \setminus \{u_0\}$.
\end{proof}

\begin{remark}\label{Remark1}
Let $g \in \mathbb{F}_q[x]$ be a monic divisor of $x^n-1$ with $\deg g=k$, and 
let $\beta\in \mathbb{F}_{q^n}$ be such that $\Ord(\beta)=g$.
Note that $g\neq x^k-1$ if and only if $\mathbb{F}_{q^n}=\mathbb{F}_q(\beta)$. In fact,
$\beta \in \mathbb{F}_{q^d}$ for some  divisor $d$ of $n$ is equivalent to 
$g \mid x^d-1$. 
Since $\deg g=k$ and $d<n$, the last assertion means $g=x^k-1$.

Hence, to get an element $\beta \in \mathbb{F}_{q^n}$ which generates $\mathbb{F}_{q^n}$,
we need that $\Ord(\beta) \neq x^k-1$. Thus, if the hypotheses of Proposition \ref{prop1} are satisfied and $\Ord(\beta) \neq x^k-1$, we may apply Lemma \ref{cotaenFq} in order to prove the existence of a primitive element of the form $\beta+u$, where $u \in \mathbb{F}_q$.

This follows the ideas of Davenport \cite{davenport2} and Carlitz \cite{carlitz2} about the translate property.
\end{remark}

The last remark motivates the following definition.

\begin{definition}
We say that a polynomial $g \in \mathbb{F}_q[x]$ 
satisfies property $(A)$ if $g$ is a monic 
divisor of $x^{n}-1$
of degree $k$ such that
$g\neq x^k-1$ and $(x-1)\mid g$.
\end{definition}

\begin{theorem}\label{teo-principal}
Let $g \in \mathbb{F}_q[x]$ be a polynomial satisfying property (A), $\beta=L_f(\alpha) \in \mathbb{F}_{q^{n}}$ where $f=\frac{x^{n}-1}{g}$, $m$ a divisor of $q^n-1$ and
$N_{\beta}(m)$ 
the number of elements $u \in \mathbb{F}_q$ such that $\beta+u$ is $m$-free.
If
$q^{1/2} \geq (n-1) W(m),$
then $N_{\beta}(m)>0$. In particular, if $q^{1/2} \geq (n-1) W(q^{n}-1)$ there exists 
a primitive $k$-normal element in $\mathbb{F}_{q^{n}}$ over $\mathbb{F}_q$.
\end{theorem}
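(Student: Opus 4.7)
The plan is to estimate $N_{\beta}(m)$ by a standard character-sum argument. Since property~$(A)$ forces $g\neq x^{k}-1$, Remark~\ref{Remark1} yields $\mathbb{F}_{q^{n}}=\mathbb{F}_q(\beta)$; in particular $\beta\notin\mathbb{F}_q$, so $\beta+u\neq 0$ for every $u\in\mathbb{F}_q$. Hence $w_m(\beta+u)$ is exactly the indicator of $\beta+u$ being $m$-free, and applying Proposition~\ref{caracteristica-mlivre} one writes
\[
N_{\beta}(m)\;=\;\sum_{u\in\mathbb{F}_q}w_m(\beta+u)
\;=\;\theta(m)\sum_{d\mid m}\frac{\mu(d)}{\varphi(d)}\sum_{(d)}\,\sum_{u\in\mathbb{F}_q}\eta_d(\beta+u).
\]

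Next I would separate the main term from the error. The term $d=1$ contributes $\theta(m)q$, since $\eta_1(\beta+u)=1$ for every $u$. For each squarefree $d\mid m$ with $d>1$ and each multiplicative character $\eta_d$ of order $d$, I rewrite the inner sum as $\sum_{u\in\mathbb{F}_q}\eta_d\bigl(u-(-\beta)\bigr)$; since $-\beta$ also generates $\mathbb{F}_{q^{n}}$ over $\mathbb{F}_q$, Lemma~\ref{cotaenFq} bounds it in absolute value by $(n-1)\sqrt{q}$. There are $\varphi(d)$ characters of order $d$ and $W(m)-1$ nontrivial squarefree divisors of $m$, so the total error is at most $\theta(m)(W(m)-1)(n-1)\sqrt{q}$, giving
\[
N_{\beta}(m)\;\geq\;\theta(m)\sqrt{q}\,\bigl[\sqrt{q}-(W(m)-1)(n-1)\bigr].
\]

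Under the hypothesis $\sqrt{q}\geq(n-1)W(m)$ the bracket is at least $n-1>0$, so the right-hand side is strictly positive and hence $N_{\beta}(m)>0$. For the final claim, set $m=q^{n}-1$: every $(q^{n}-1)$-free element is primitive, and by Proposition~\ref{prop1} together with Theorem~\ref{equiv-knormal} every $\beta+u$ with $u\neq u_0$ is $k$-normal. The same estimate in fact yields $N_{\beta}(q^{n}-1)\geq\theta(m)W(m)(n-1)^{2}\geq(n-1)^{2}\geq 9$, where I use the elementary inequality $W(m)\theta(m)=\prod_{p\mid m}2(1-1/p)\geq 1$ and the fact that property~$(A)$ forces $n=2k\geq 4$; this comfortably exceeds one, so one may choose an admissible $u\neq u_0$ and obtain a primitive $k$-normal element.

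The main technical point I anticipate is not the character-sum manipulation itself, which is a standard Vinogradov-type application of Lemma~\ref{cotaenFq}, but rather the bookkeeping around the exceptional value $u_0$ from Proposition~\ref{prop1}: one must verify that the bound on $N_{\beta}(q^{n}-1)$ leaves enough slack to discard $u_0$ and still certify a $k$-normal primitive, which is where the inequality $W(m)\theta(m)\geq 1$ enters.
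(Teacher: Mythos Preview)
Your proof is correct and follows essentially the same character-sum argument as the paper: invoke Remark~\ref{Remark1} to ensure $\beta$ generates $\mathbb{F}_{q^n}$, expand $N_\beta(m)$ via Proposition~\ref{caracteristica-mlivre}, isolate the $d=1$ term, and bound the rest with Lemma~\ref{cotaenFq}. Your handling of the exceptional $u_0$ from Proposition~\ref{prop1} (via the slack $N_\beta(q^n-1)\geq(n-1)^2>1$) is actually more careful than the paper's proof, which does not explicitly verify that the primitive $\beta+u$ it produces can be chosen with $u\neq u_0$.
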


\begin{proof}
From Remark \ref{Remark1} it follows that $\mathbb{F}_{q^{n}}=\mathbb{F}_q(\beta)$.  Therefore, from Lemma \ref{cotaenFq}, for any non-trivial multiplicative character $\chi$ over $\mathbb{F}_{q^{n}}$, we have
\begin{equation}\label{cotaparag}
 \left| \sum \limits_{u \in \F_q} \chi(\beta+u) \right| \leq (n-1) \sqrt{q}.
\end{equation}
From Proposition \ref{caracteristica-mlivre}, we obtain
\begin{align*}
N_\beta(m) & =\sum_{u \in \F_q} w_m\left( \beta+u \right) = 
\theta(m) \left( \sum_{ u \in \F_q} \chi_1(\beta+u) + \int \limits_{d|m, \ d \neq 1} \sum_{ u \in \F_q} \chi_d(\beta+u) \right).
\end{align*}
Using inequality \eqref{cotaparag}, we get the estimate
\begin{align*}
\left| \frac{N_\beta(m)}{\theta(m)} - q \right| & \leq \left |  \sum_{\substack{d|m \\ d \neq 1}}  
\dfrac{\mu(d)}{\varphi(d)} \sum_{(d)} \sum_{u \in \F_q} \chi_d(\beta+u) \right| \\
& \leq (n-1) \sqrt{q} \sum_{\substack{d|m \\ d \neq 1}} |\mu(d)|
= (n-1)(W(m)-1)\sqrt{q}.
\end{align*}
Thus, \( \frac{N_\beta(m)}{\theta(m)} \geq q - (n-1)(W(m)-1)\sqrt{q} \), which establishes the desired result.
\end{proof}

\begin{remark}\label{no-existe}
Note that the nonexistence of a polynomial $g$ satisfying property $(A)$ implies that $x^{n}-1$ has only $x^k-1$ and $x^k+1$ as monic factors of degree $k$. Moreover, when $x^{n}-1$ has only these two monic degree-$k$ factors, then $\mathbb{F}_{q^{n}}$ cannot contain primitive $k$-normal elements.
Let us prove this claim.

Let $\beta = L_{x^k\pm 1}(\alpha) =\alpha^{q^k} \pm \alpha$. Then,
$\beta^{q^k} = \alpha^{q^{2k}} \pm \alpha^{q^k}=\alpha \pm \alpha^{q^k} = \mp \beta$.
Consequently, $\beta^{2(q^k-1)}=1$. Hence, the multiplicative order of $\beta$ divides $2(q^k-1)$ and $\beta$ cannot be primitive.
\end{remark}

Let us now determine for which field extensions $\mathbb{F}_{q^{n}}$ over $ \mathbb{F}_q$  there exists a polynomial $g\in\mathbb{F}_q[x]$ satisfying property $(A)$.
We will prove that this existence depends on both $k$ and $q$.

\begin{lemma}\label{propertyA-even}
Let $q=2^m,$ with $m\geq 1$, $k=2^s\cdot t \geq 2$, with $s\geq 0$, $t$ an odd positive integer, and $d=\ord_t(2^m).$ The polynomial $x^{n}-1\in\mathbb{F}_q[x]$ has a factor $g\in\mathbb{F}_q[x]$ satisfying (A) if and only if $t>1$ and one of the following conditions holds:
\begin{itemize}
    \item [(a)] $t$ is prime, $d=t-1$ and $t\leq 2^{s}+1$;
    \item [(b)] $t$ is prime and $d<t-1$;
    \item [(c)] $t$ has two different prime factors;
    \item [(d)] $t=r^{\ell}$ for some prime $r$ and $\ell \geq 1$ such that $d<r^\ell-r^{\ell-1}$;
    \item [(e)] $t=r^{\ell}$ for some prime $r$ and $\ell \geq 1$ such that $d=r^\ell-r^{\ell-1}$ and
    $r\leq 2^s+1$.
\end{itemize}
\end{lemma}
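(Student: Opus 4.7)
The plan is to work concretely with the factorization of $x^n - 1$ in characteristic $2$. Since $q$ is a power of $2$ and $n = 2k$ with $k = 2^s t$, I would write $x^n - 1 = (x^t - 1)^{2^{s+1}}$ and factor $x^t - 1 = (x - 1) \prod_{j=1}^{r} f_j$ where the $f_j$ are the distinct monic irreducible factors of $(x^t - 1)/(x - 1)$ over $\mathbb{F}_q$. Every monic degree-$k$ divisor $g$ of $x^n - 1$ with $(x-1) \mid g$ then corresponds bijectively to an exponent tuple $(e_0, e_1, \ldots, e_r)$ satisfying $1 \leq e_0 \leq 2^{s+1}$, $0 \leq e_j \leq 2^{s+1}$, and $e_0 + \sum_j e_j \deg f_j = k$. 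The ``trivial'' tuple $e_i = 2^s$ for all $i$ produces exactly $g = x^k - 1$, so property $(A)$ is equivalent to the existence of any other valid tuple.

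Two local moves perturb the trivial tuple. Move (I): if two distinct irreducible factors $f_i, f_j$ share a common degree, replace $(e_i, e_j) = (2^s, 2^s)$ by $(2^s + 1, 2^s - 1)$. Move (II): if some $f_j$ has degree $d_j \leq 2^s$, replace $(e_0, e_j) = (2^s, 2^s)$ by $(2^s + d_j, 2^s - 1)$. Both stay within the prescribed bounds and yield a $g \neq x^k - 1$. For the sufficient direction I would verify that each of (a)--(e) admits one of these moves, exploiting that each cyclotomic factor $\Phi_e$ splits over $\mathbb{F}_q$ into $\varphi(e)/\ord_e(q)$ irreducibles of common degree $\ord_e(q)$. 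Cases (b) and (d) use (I) applied to $\Phi_t$ and $\Phi_{r^\ell}$ respectively; cases (a) and (e) use (II) with $\Phi_r$, whose factors have degree $r-1 \leq 2^s$ by the hypothesis $r \leq 2^s + 1$. Case (c) requires the key extra observation: for distinct odd primes $p, p' \mid t$, the polynomial $\Phi_{pp'}$ has degree $(p-1)(p'-1)$, and since $2 \mid \gcd(p-1, p'-1)$ one obtains $\ord_{pp'}(q) = \operatorname{lcm}(\ord_p(q), \ord_{p'}(q)) \leq (p-1)(p'-1)/2$, so $\Phi_{pp'}$ splits into at least two factors and (I) applies.

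For the necessary direction, the case $t = 1$ is immediate since only the trivial tuple fits. Assuming $t > 1$ and that none of (a)--(e) holds, negating (c) forces $t = r^\ell$ for a single odd prime $r$; negating (b) and (d) then forces $d = \varphi(r^\ell)$, so $q$ is a primitive root modulo $r^\ell$ and hence modulo every $r^j$ with $j \leq \ell$, which makes each $\Phi_{r^j}$ irreducible over $\mathbb{F}_q$; and negating (a) and (e) yields $r > 2^s + 1$. The irreducible factors of $(x^t-1)/(x-1)$ therefore have the distinct degrees $r^{i-1}(r-1)$ for $i = 1, \ldots, \ell$. Setting $v_i = e_i - 2^s$, an alternative tuple requires a nonzero $(v_1, \ldots, v_\ell)$ with $|v_i| \leq 2^s$ and $(r-1) \sum_{i=1}^{\ell} v_i r^{i-1} = 2^s - e_0 \in [-2^s, 2^s - 1]$. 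Since $r - 1 > 2^s$, this forces $\sum_i v_i r^{i-1} = 0$; but $|v_i| \leq 2^s < r$, so reducing modulo $r$ and iterating yields $v_i = 0$ for all $i$, contradicting non-triviality.

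The main obstacle I anticipate is the sufficiency step for case (c): one has to pinpoint the correct cyclotomic factor, and the parity trick $2 \mid \gcd(p-1, p'-1)$ that forces $\Phi_{pp'}$ to split is what makes the argument go through. Once this is handled, the remaining sufficiency cases are direct applications of moves (I) and (II), and the necessity direction reduces cleanly to the uniqueness of base-$r$ digit representations.
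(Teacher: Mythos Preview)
Your proposal is correct and follows essentially the same approach as the paper: both parametrize the degree-$k$ divisors of $x^n-1=(x^t-1)^{2^{s+1}}$ by exponent tuples on the irreducible cyclotomic factors, identify the all-$2^s$ tuple with $x^k-1$, construct explicit perturbations for sufficiency (your moves (I) and (II) are exactly the paper's constructions in cases (b)/(d) and (a)/(e), and the parity trick for $\Phi_{pp'}$ in case (c) is identical), and rule out nontrivial tuples for necessity when all $\Phi_{r^j}$ are irreducible and $r>2^s+1$. Your base-$r$ digit-uniqueness argument for the necessity step is a slightly cleaner variant of the paper's magnitude bound, but the substance is the same.
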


\begin{proof}
If $t=1$,
the polynomial $x^{n}-1=(x-1)^{2^{s+1}}$ has only $(x-1)^{2^{s}}$ as a monic factor of degree $k=2^s$.
Therefore, from Remark \ref{no-existe}, we conclude that there is no primitive $k$-normal element of $\mathbb{F}_{q^{n}}$ over $\mathbb{F}_q$.

Let's suppose that $t$ is prime and $d=t-1$. From
\cite[Theorem 2.47]{Nied}, the cyclotomic polynomial
$Q_t(x) = x^{t-1}+\dots+x+1$ is irreducible. In this case, the monic factors of
$x^{n}-1$ of degree $k$ are of the form
$$
g_{a,b}(x)=(x+1)^{a}Q_t(x)^b,
$$ 
with $0 \le a,b \le 2^{s+1}$ and $a+(t-1)b=2^s\cdot t$.
If $a=b$, then $g_{a,b}=x^k-1$. Thus, the existence of a polynomial $g=g_{a,b}$
satisfying (A) is equivalent to the existence of integers 
$1 \le a \neq b \le 2^{s+1}$ with $a+(t-1)b=2^s\cdot t$. Observe that all the integer
solutions of this equation are of the form
$(a,b) = (2^s + (t-1) \ell, 2^s - \ell)$, for all $\ell \in \mathbb{Z}$.
The existence of a positive integer solution different from
$(a,b)=(2^s,2^s)$ with $1 \le a \neq b \le 2^{s+1}$ is equivalent to
$t \le 2^s+1$, which proves (a).

Suppose now that $t$ is prime and $d<t-1$.
By \cite[Theorem 2.47]{Nied} we have that 
$Q_t(x)=h_1(x)\cdots h_{\frac{t-1}{d}}(x)$, where $h_1(x), \ldots ,h_{\frac{t-1}{d}}(x)$ are  monic irreducible distinct polynomials of degree $d$.
In this case, all the monic factors of $x^{n}-1$ of degree $k$ are of the form
$$
g_{a}(x)=(x+1)^{a_0}h_1(x)^{a_1}\cdots h_{\frac{t-1}{d}}(x)^{a_{\frac{t-1}{d}}},
$$ 
such that
$a_0+d(a_1+a_2+\dots+a_{\frac{t-1}{d}})=2^s\cdot t$. If $a_0 \ge 1$ and
$a_i\neq 2^s$, for some $i=1,\cdots, \frac{t-1}{d}$, 
the polynomial $g_a(x)$ satisfies (A).
Since $\frac{t-1}{d}>1$, we can choose $a_1=2^s+1$, $a_2=2^s-1$ and $a_i=2^s$ for $i\neq 1,2$.
Thus, in this case, there exists a polynomial $g$ satisfying (A).

Suppose now that $t$ has at least two different prime factors. 
Let $p_1$ and $p_2$ be two prime factors of $t$. It is clear that
$p_1$ and $p_2$ divide $(2^m)^{\frac{(p_1-1)(p_2-1)}{2}}-1$, thus
$\ord_{p_1p_2}(2^m)< \varphi(p_1 p_2)= (p_1-1)(p_2-1)$. This implies that
the polynomial $Q_{p_1p_2}(x)$ is reducible. Hence, there exist 
distinct monic irreducible polynomials $h_1(x)$ and $h_2(x)$ dividing $Q_{p_1p_2}(x)$.
In this case, the polynomial
$g(x)=\frac{x^k-1}{h_2(x)}\cdot h_1(x)$ satisfies (A), since
$Q_{p_1p_2}(x)$ divides $ x^k-1$.
That proves (c).




Suppose that $t=r^{\ell}$, where $r$ is a prime number, $\ell \geq 1$ and $d<r^\ell-r^{\ell-1}$.
Then $Q_{r^\ell}(x)$ is reducible and we proceed as in case (c) to prove (d).

Finally, suppose that $t=r^{\ell}$, where $r$ is a prime number, $\ell \geq 1$ and $d=r^\ell-r^{\ell-1}$.
Thus ${\ord}_{r^v}(2^m)=r^{v}-r^{v-1}$, for all $1\leq v \leq \ell$ and
all $Q_{r^v}(x)$ are irreducible.
In this case, a monic factor of degree $k$ of $x^{n}-1$ is of the form 
$$
g_{a}(x)=(x+1)^{a_0}Q_r(x)^{a_1}\cdots Q_{r^{\ell}}(x)^{a_{\ell}},
$$ 
with $a_0+a_1(r-1)+\dots + a_{\ell}(r^{\ell}-r^{\ell-1})=2^sr^{\ell}$. Define $b_j=a_j-2^s$. Observe that $g_a(x)$ satisfies condition (A) if and only if there exists a nonzero $(\ell+1)$-tuple of integers $(b_0,\dots,b_{\ell})$ satisfying
$|b_i|\leq 2^s$, for $0\leq i\leq \ell$, $b_0\neq -2^s$ and $$b_0+b_1(r-1)+\dots+b_{\ell}(r^{\ell}-r^{\ell-1})=0.$$
Let $(b_0,\dots,b_{\ell})$ be such an $(\ell+1)$-tuple and
$j=\max\{\,i\, |\, b_i\neq 0\}$. Then the last equation becomes 
$$b_0+b_1(r-1)+\dots+b_{j}(r^{j}-r^{j-1})=0.$$
If $r>2^{s}+1$, then 
\begin{eqnarray*}
|b_0+b_1(r-1)+\dots+b_{j-1}(r^{j-1}-r^{j-2})| &\leq & 2^s(1+(r-1)+\dots+(r^{j-1}-r^{j-2}))\\
& =& 2^sr^{j-1}<(r-1)r^{j-1}
\end{eqnarray*}
and $|(r^j-r^{j-1})b_j|\geq r^j-r^{j-1}$. This implies that there is no such $(\ell+1)$-tuple. Thus, no polynomial $g_a(x)$ satisfies (A).

If $r\leq 2^s+1$, we may choose $b_0=r-1\geq1, b_1=-1$ and $b_i=0$ for all $2\leq i\leq \ell$. This choice yields a polynomial $g_a(x)$ satisfying (A), which completes the proof for case (e).
\end{proof}

\begin{lemma}\label{propertyA-odd}
Let $p$ be an odd prime number, $q=p^m$ a prime power of $p$ and $k=2^s\cdot t$ a positive integer, with $s\geq 0$, $t$ an odd positive integer and $k\geq 2.$ The polynomial $x^{n}-1$ has a factor $g\in\mathbb{F}_q[x]$ satisfying (A) if and only if
one of the following conditions holds:
\begin{itemize}
    \item [(i)] $t>1$;
    \item [(ii)] $t=1$ and $m$ is even;
    \item [(iii)] $t=1$, $m$ is odd and $\ord_{2^{s+1}}(p)<2^s$.
\end{itemize}
\end{lemma}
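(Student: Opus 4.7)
The plan is to handle the cases $t>1$ and $t=1$ separately, exhibiting an explicit cyclotomic $g$ when it exists and ruling it out otherwise. Throughout I will write $Q_m(x)$ for the $m$-th cyclotomic polynomial and use the factorization $x^N-1=\prod_{d\mid N}Q_d(x)$ over $\mathbb{F}_q$, with natural multiplicities $p^{v_p(N)}$ when $p\mid N$.

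For the case $t>1$, I would construct $g$ by a cyclotomic swap. Assume first $p\nmid t$, and pick any prime divisor $r$ of $t$ (necessarily odd). Set
$$g(x)\;=\;\frac{x^k-1}{\displaystyle\prod_{a=0}^{s}Q_{2^a r}(x)}\cdot Q_{2^{s+1}r}(x).$$
The denominator divides $x^k-1$ because $2^a r\mid 2^s t=k$ for each $0\le a\le s$; the added factor $Q_{2^{s+1}r}$ divides $x^n-1$ but is coprime to $x^k-1$ (since $2^{s+1}r\nmid 2^s t$); the identity $\sum_{a=0}^{s}\varphi(2^a)=2^s$ gives $\deg g=k$; the factor $Q_1=x-1$ survives the quotient because $r>1$ implies no $Q_{2^a r}$ equals $Q_1$; and $g\neq x^k-1$ since $Q_{2^{s+1}r}\mid g$ while $Q_{2^{s+1}r}\nmid x^k-1$. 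When $p\mid t$, write $t=p^a t'$ with $p\nmid t'$: if $t'>1$ apply the same construction to $x^{n/p^a}-1$ and raise the result to the $p^a$-th power; if $t'=1$, use the multiplicities in $x^n-1=(x^{2^{s+1}}-1)^{p^a}$ to write $g$ directly, e.g.\ $g=\prod_{i=0}^{s-1}Q_{2^i}^{p^a}\cdot Q_{2^s}^{p^a-2}\cdot Q_{2^{s+1}}$ for $s\ge 1$ (valid since $p^a\ge 3$), and $g=(x-1)(x+1)^{p^a-1}$ for $s=0$.

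For the case $t=1$, we have $k=2^s$, $n=2^{s+1}$, and $x^n-1=\prod_{i=0}^{s+1}Q_{2^i}(x)$ is separable because $p$ is odd. The technical input is the classical fact $\ord_{2^{i+1}}(q)\in\{\ord_{2^i}(q),\,2\ord_{2^i}(q)\}$: orders can at most double at each step, so starting from $\ord_2(q)=1$, the assumption $\ord_{2^{s+1}}(q)=2^s$ forces $\ord_{2^i}(q)=2^{i-1}$ and hence $Q_{2^i}$ irreducible for every $0\le i\le s+1$. In that case the irreducible factors of $x^n-1$ have degrees $1,1,2,4,\dots,2^s$, so by uniqueness of binary representations the only degree-$k$ product containing $Q_1$ is $Q_1 Q_2 Q_4\cdots Q_{2^s}=x^k-1$, which violates (A); thus no $g$ exists. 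Conversely, if $\ord_{2^{s+1}}(q)=2^e<2^s$, I pick an irreducible factor $h$ of $Q_{2^{s+1}}$ of degree $2^e=\deg Q_{2^{e+1}}$ and set
$$g(x)\;=\;\frac{x^k-1}{Q_{2^{e+1}}(x)}\cdot h(x),$$
which has degree $k$, is divisible by $(x-1)$, and differs from $x^k-1$. Finally I translate $\ord_{2^{s+1}}(q)<2^s$ into (ii) and (iii) via $\ord_{2^{s+1}}(q)=\ord_{2^{s+1}}(p)/\gcd(\ord_{2^{s+1}}(p),m)$: even $m$ forces the gcd to be at least $2$ whenever $\ord_{2^{s+1}}(p)>1$ (otherwise the order already equals $1<2^s$ since $s\ge 1$), giving the inequality unconditionally; odd $m$ leaves the gcd equal to $1$, collapsing the condition to $\ord_{2^{s+1}}(p)<2^s$. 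The main obstacle will be the converse direction for $t=1$—confirming that the doubling-of-orders lemma really forces every $Q_{2^i}$ to be irreducible, and that the binary-digit counting argument leaves no alternative degree-$k$ divisor containing $(x-1)$.
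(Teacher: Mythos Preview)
Your argument is correct in both directions, and for the case $t=1$ it follows essentially the same line as the paper (cyclotomic factorization of $x^{2^{s+1}}-1$, reducibility of $Q_{2^{s+1}}$ governed by $\ord_{2^{s+1}}(q)$, then the gcd formula $\ord_{2^{s+1}}(q)=\ord_{2^{s+1}}(p)/\gcd(m,\ord_{2^{s+1}}(p))$); in fact you supply details the paper leaves implicit, namely the doubling-of-orders step forcing all $Q_{2^i}$ to be irreducible, the binary-digit counting ruling out any other degree-$k$ divisor containing $x-1$, and an explicit $g$ in the converse.

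Where you genuinely diverge is the case $t>1$. The paper dispatches it in one line with the single polynomial
\[
g(x)=(x^{2^s}-1)\bigl(x^{2^s(t-1)}-x^{2^s(t-2)}+\cdots-x^{2^s}+1\bigr)=(x^{2^s}-1)\cdot\frac{x^k+1}{x^{2^s}+1},
\]
which works uniformly because in odd characteristic $x^k-1$ and $x^k+1$ are coprime; no case split on whether $p\mid t$ is needed. Your cyclotomic-swap construction is valid and conceptually pleasant (and would generalize to settings where such an explicit factor of $x^k+1$ is not available), but it forces you through the subcases $p\nmid t$, $p\mid t$ with $t'>1$, and $t=p^a$ with a further split on $s$. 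So the paper's choice buys brevity and avoids the characteristic bookkeeping, while your approach buys a more structural picture of how (A) can be satisfied by rearranging cyclotomic factors.
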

\begin{proof}
If $t>1$, we may choose
$g=(x^{2^s}-1)(x^{2^s (t-1)} - x^{2^s  (t-2)} + \cdots - x^{2^s} + 1)$, which satisfies (A).

Let us now suppose that $t=1$. Consider the factorization of 
$x^{n}-1= x^{2^{s+1}}-1$
into cyclotomic polynomials $$x^{2^{s+1}}-1=\prod_{i=0}^{s+1} Q_{2^i}(x).$$
Note that if $Q_{2^{s+1}}(x)$ is irreducible, then
all cyclotomic polynomials $Q_{2^i}(x)$ for $0\le i \le s$ are also irreducible, and the only monic
factors of $x^{n}-1$ of degree $k$ are
\[
x^k-1 = \prod_{i=0}^{s} Q_{2^i}(x) \quad
\textrm{and}
\quad
x^k + 1 = Q_{2^{s+1}}(x).
\]
Thus, the existence of a polynomial $g$ satisfying $(A)$ depends on the 
factorization of $Q_{2^{s+1}}(x)$.
By  \cite[Theorem 2.47]{Nied} we have that the cyclotomic polynomial $Q_{2^{s+1}}(x)$ is factored
into $\frac{2^s}{\ord_{2^{s+1}}(q)}$ distinct monic irreducible polynomials in $\mathbb{F}_q[x]$ of the same degree 	
$\ord_{2^{s+1}}(q)$. Hence, the polynomial $g$ satisfying $(A)$ exists if and only if $\ord_{2^{s+1}}(q) < 2^s$. 
Using the fact that 
$\ord_{2^{s+1}}(q) = \frac{\ord_{2^{s+1}}(p)}{\gcd \left( m, \ \ord_{2^{s+1}}(p) \right)}$ we obtain the required result.
\end{proof}

\section{Particular case: $n=6$ and $k=3$}\label{pc}

The Primitive 1-Normal Theorem in \cite{lucas1} employed the method of Gauss sums and properties of the trace function to prove the impossibility of finding primitive 1-normal elements in $\mathbb{F}_{q^2}$ over $\mathbb{F}_q$, which corresponds to the critical value $k=1$. Subsequently, the Primitive 2-Normal Theorem in \cite{AN} used 
the bound given in Theorem \ref{teo-exist} and the factorization of $x^n-1$ in conjunction with sieve methods
to prove the existence of primitive 2-normal elements for 
 $n > 4$. The case $n=4$ is handled with similar ideas to those in the previous section.

From Lemmas \ref{propertyA-even} and \ref{propertyA-odd}, 
for $n=6$ and $k=3$,
a polynomial $g\in \mathbb{F}_q[x]$ satisfying (A) exists if and only if $q$ is an odd prime power or a power of $2$ with an even exponent.
Furthermore, Theorem \ref{teo-principal} implies that a primitive $3$-normal element in
$\mathbb{F}_{q^6}$ over $\mathbb{F}_q$ exists if $q$ also satisfies $q^{1/2} \geq 5 W(q^6-1)$.
In order to improve the bounds obtained from this last condition, 
we apply a sieve method to get lower bounds for $q$. The proof of the next result is similar to that of
\cite[Propositions 3, 5]{AN}, and is therefore omitted.

\begin{proposition}\label{crivo}
Let $q$ be an odd prime power or a power of $2$ with an even exponent,
and let $m$ be a positive divisor of $q^6-1$. Let $p_1,\ldots , p_r$ be prime numbers such that
$\operatorname{rad}(q^6-1)=\operatorname{rad}(m)\cdot p_1 \cdots p_r$. Suppose that
$\delta = 1 - \sum_{i=1}^r \frac{1}{p_i}>0$ and let
$\Delta = \frac{r-1}{\delta}+2$. 
If 
$q^{1/2} \ge 5 W(m) \Delta$, then there exists 
a primitive $3$-normal element in $\mathbb{F}_{q^6}$ over $\mathbb{F}_q$.
\end{proposition}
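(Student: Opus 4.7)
The plan is to refine the estimate in Theorem \ref{teo-principal} via a sieve on the multiplicative order. Since $q$ is an odd prime power or an even power of $2$, Lemmas \ref{propertyA-even} and \ref{propertyA-odd} guarantee a polynomial $g\in\mathbb{F}_q[x]$ satisfying property $(A)$. Fixing a normal element $\alpha\in\mathbb{F}_{q^6}$ over $\mathbb{F}_q$ and setting $\beta=L_f(\alpha)$ for $f=(x^6-1)/g$, Proposition \ref{prop1} together with Remark \ref{Remark1} ensure that $\beta+u$ is $3$-normal and satisfies $\mathbb{F}_{q^6}=\mathbb{F}_q(\beta+u)$ for every $u\in\mathbb{F}_q$ outside an exceptional singleton. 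It therefore suffices to find such a $u$ for which $\beta+u$ is in addition $(q^6-1)$-free.

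For this I would invoke the classical sieve inequality
\begin{equation*}
w_{q^6-1}(\gamma)\;\geq\;\sum_{i=1}^{r} w_{mp_i}(\gamma)\;-\;(r-1)\,w_m(\gamma),\qquad \gamma\in\mathbb{F}_{q^6}^{*},
\end{equation*}
which follows from Bonferroni applied to the events ``$\gamma$ is a $p_i$-th power'' together with $m$-freeness. Summing over $u\in\mathbb{F}_q$ with $\gamma=\beta+u$ gives
\begin{equation*}
N_\beta(q^6-1)\;\geq\;\sum_{i=1}^{r} N_\beta(mp_i)\;-\;(r-1)\,N_\beta(m),
\end{equation*}
reducing the task to showing that the right-hand side is positive.

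The crucial step is to expand each $N_\beta(M)$ into a sum over multiplicative characters, as in the proof of Theorem \ref{teo-principal}, but to collect terms \emph{before} applying absolute values. Writing $N_\beta(M)/\theta(M)=q+A_M$ with $A_M$ a sum over nontrivial characters, the characters of order dividing $m$ contribute a common piece $A_m$ to each $N_\beta(mp_i)/\theta(mp_i)$ and to $N_\beta(m)/\theta(m)$. Since $\theta(mp_i)=\theta(m)(1-1/p_i)$ and $\delta=1-\sum_{i=1}^{r}1/p_i$, these common pieces assemble with coefficient $\theta(m)\delta$, matching the coefficient of the leading $q$. What remains is one character sum per character whose order is divisible by some $p_i$; there are $W(m)$ such characters per prime $p_i$, and Lemma \ref{cotaenFq} bounds each one by $5\sqrt{q}$.

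Putting the pieces together yields
\begin{equation*}
N_\beta(q^6-1)\;\geq\;\theta(m)\bigl[\,\delta\, q\;-\;5\sqrt{q}\bigl(W(m)(r-1+2\delta)-\delta\bigr)\bigr],
\end{equation*}
which is positive as soon as $\sqrt{q}\geq 5 W(m)\bigl((r-1)/\delta+2\bigr)=5 W(m)\Delta$. The main obstacle is doing the cancellation bookkeeping correctly: a naive triangle-inequality bound on $N_\beta(mp_i)$ and $N_\beta(m)$ separately replaces $\delta$ by $r-1+\delta$ in the coefficient of $A_m$ and produces a markedly weaker threshold of roughly $5W(m)\bigl(3(r-1)/\delta+2\bigr)$. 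Identifying precisely which characters survive the sieve and collecting them with the correct coefficient $\theta(m)\delta$ is exactly what delivers the bound stated in the proposition.
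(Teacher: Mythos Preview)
Your proposal is correct and follows the standard Cohen--Huczynska sieve argument that the paper has in mind: the paper itself omits the proof entirely, pointing to \cite[Propositions~3,~5]{AN}, and your sketch is precisely that argument specialized to the present setup (the key cancellation you flag---collecting the common $A_m$ piece with coefficient $\theta(m)\delta$ before bounding---is exactly the step that produces $\Delta=(r-1)/\delta+2$ rather than the cruder constant). One minor wording point: the Katz bound (Lemma~\ref{cotaenFq}) only needs $\mathbb{F}_{q^6}=\mathbb{F}_q(\beta)$, which holds because $g\neq x^3-1$, so you need not track whether each $\beta+u$ generates; and since the exceptional $u_0$ of Proposition~\ref{prop1} could in principle be the unique primitive translate, one should note that the lower bound actually gives $N_\beta(q^6-1)\ge\theta(m)\cdot 5\sqrt{q}>1$ under the hypothesis, so at least two translates are primitive and one of them is $3$-normal.
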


Using all the results obtained in this work, we determine for which prime powers $q$  there exist primitive $3$-normal elements in $\mathbb{F}_{q^6}$ over $\mathbb{F}_q$.

\begin{theorem}\label{principaln6}
Let $q$ be a prime power. There exists a primitive $3$-normal element in $\mathbb{F}_{q^6}$ over $\mathbb{F}_q$ if and only if
$q$ is an odd prime power or a power of $2$ with an even exponent.
\end{theorem}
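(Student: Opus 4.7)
My plan is to treat the two implications separately. The ``only if'' direction follows quickly from the machinery already developed in Section~\ref{cv}, while the ``if'' direction combines the main existence theorem with a sieving refinement and a short computer verification.

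For the ``only if'' direction, assume $q=2^a$ with $a$ odd; I would show that no polynomial $g\in\mathbb{F}_q[x]$ satisfies property~(A) and then invoke Remark~\ref{no-existe}. Writing $k=3=2^0\cdot 3$, Lemma~\ref{propertyA-even} applies with $s=0$, $t=3$, $d=\ord_3(2^a)$. Since $a$ is odd, $2^a\equiv 2\pmod 3$, so $d=2=t-1$. Running through cases (a)--(e): case~(a) demands $t\leq 2^s+1=2$, which fails; case~(b) demands $d<t-1$, which fails; cases~(c)--(e) require $t$ to have two distinct prime factors or to be a prime power $r^\ell$ with $r\leq 2^s+1$, none of which apply to $t=3=3^1$ with $s=0$. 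Hence no $g$ with property~(A) exists, and Remark~\ref{no-existe} then precludes primitive $3$-normal elements in $\mathbb{F}_{q^6}$.

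For the ``if'' direction, assume $q$ is odd or $q=2^{2b}$ for some $b\geq 1$. Lemma~\ref{propertyA-odd} (case~(i), since $t=3>1$) or Lemma~\ref{propertyA-even} (case~(b), since $\ord_3(2^{2b})=1<2=t-1$) supplies a polynomial $g\in\mathbb{F}_q[x]$ satisfying~(A). Theorem~\ref{teo-principal} then guarantees a primitive $3$-normal element whenever $q^{1/2}\geq 5W(q^6-1)$. For large $q$ this is automatic, but many small prime powers evade this raw bound. To tighten the threshold I would apply Proposition~\ref{crivo}: for each $q$ that fails the plain Theorem~\ref{teo-principal} bound, strip a handful of the smallest prime divisors $p_1,\ldots,p_r$ of $q^6-1$ out of the auxiliary divisor $m$, so that $W(m)$ shrinks aggressively, while keeping $\delta=1-\sum_i 1/p_i$ positive and $\Delta=(r-1)/\delta+2$ moderate. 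Tuning this choice numerically for each $q$ in the critical range reduces the problem to a short finite list of small prime powers.

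The main obstacle is exactly this final finite verification. For each surviving $q$, using the SageMath procedures of the Appendix I would fix a $g$ satisfying~(A), construct $\beta=L_f(\alpha)$ for a normal $\alpha$ and $f=(x^6-1)/g$, and loop over $u\in\mathbb{F}_q$ until $\beta+u$ is shown to be primitive; Proposition~\ref{prop1} assures that at most one $u$ could destroy $3$-normality, so every success of the primitivity test yields a valid witness. Successful termination of this loop for every surviving small $q$ then completes the proof.
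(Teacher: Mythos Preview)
Your strategy matches the paper's: rule out $q=2^a$ with $a$ odd via Lemma~\ref{propertyA-even} and Remark~\ref{no-existe}, then for the remaining $q$ combine Theorem~\ref{teo-principal}, the sieve of Proposition~\ref{crivo}, and an explicit search over the surviving small cases using the Appendix algorithms. One slip: in the sieve you must remove the \emph{largest} prime divisors of $q^6-1$ from $m$, not the smallest. Since $W(m)=2^{\omega(m)}$ depends only on how many primes are removed, stripping small primes gains nothing for $W(m)$ but drives $\delta=1-\sum_i 1/p_i$ negative (already $\tfrac12+\tfrac13+\tfrac15>1$), so Proposition~\ref{crivo} cannot even be applied. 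The paper (and its Algorithm~\ref{sieve}) always sieves out the largest primes.

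The phrase ``for large $q$ this is automatic'' also hides real work: the paper makes it effective via a bound of the shape $W(m)\le A_{t,u}\,q^{6/(t+u)}$ with tuned real parameters $t,u$ to obtain an initial threshold $q<6.46\times 10^{73}$, then iterates the sieve several times to reduce this to $q<2.79\times 10^{7}$, leaving about $1.7$ million candidates of which $9{,}551$ survive the per-$q$ sieve and are dispatched by direct search (with $q=7$ treated by hand). Finally, in your last step note that a $u$ for which $\beta+u$ is primitive could coincide with the exceptional $u_0$ of Proposition~\ref{prop1}, so a $3$-normality check is still required; the paper's Algorithm~\ref{explicit} does this via the degree of $\gcd\bigl(\sum_i(\beta+u)^{q^i}x^{5-i},\,x^6-1\bigr)$.
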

\begin{proof}
From Remark \ref{no-existe}, if $q$ is a power of $2$ with an odd exponent, then no $3$-normal element
of $\mathbb{F}_{q^6}$ over $\mathbb{F}_q$ is primitive. Thus, from now on, assume that 
$q$ is an odd prime power or a power of $2$ with an even exponent. From Lemmas \ref{propertyA-even} and \ref{propertyA-odd},
in this case,
there exists a polynomial $g\in \mathbb{F}_q[x]$ that satisfies (A).

Let $t$ and $u$ be positive real numbers such that $t+u > 12$, and let
$p_1,\ldots , p_r$ be all prime factors of $q^6-1$ between $2^t$ and $2^{t+u}$.
Choose $t$ and $u$ such that $S_{t,u}<1$, where $S_{t,u}$ is the sum of the inverses of
all prime numbers between $2^t$ and $2^{t+u}$. Let also $r(t,u)$ be the number
of all prime numbers between $2^t$ and $2^{t+u}$. For the primes $p_1, \ldots , p_r$, let
$\delta$, $m$ and $\Delta$ be the numbers given in Proposition \ref{crivo}. Observe that
$r\le r(t,u)$, $\delta \ge 1 - S_{t,u}$ and $\Delta \le 2 + \frac{r(t,u)-1}{1-S_{t,u}}=: \Delta_{t,u}$. From \cite[Lemma 2.9]{AN},
we have $W(m) \le A_{t,u} q^{6/(t+u)}$, where
\[
A_{t,u} = \prod_{\substack{\mathfrak{p}< 2^t \\ \mathfrak{p} \textrm{ is prime}}} \frac{2}{\sqrt[t+u]{\mathfrak{p}}}.
\]
Thus, if $q \ge \left( 5\cdot A_{t,u} \cdot \Delta_{t,u}\right)^{\frac{2(t+u)}{t+u-12}}$, then
\[
q^{1/2} \ge 5 \cdot A_{t,u}\cdot q^{\frac{6}{t+u}} \cdot \Delta_{t,u} \ge
5 W(m) \Delta.
\]
From Proposition \ref{crivo}, this implies the existence of a primitive $3$-normal element in $\mathbb{F}_{q^6}$ over $\mathbb{F}_q$.

Taking $t=6.48$ and $u=11.23$, and using SageMath \cite{SAGE}, we get
$r(t,u)=19137$,
$S_{t,u}\cong 0.976788370762892 <1$ and
\[
\left( 5\cdot A_{t,u} \cdot \Delta_{t,u}\right)^{\frac{2(t+u)}{t+u-12}} < 6.46 \cdot 10^{73}.
\]

Now consider $q<M=6.46 \cdot 10^{73}$. Let $u$ denote the number of distinct prime divisors of
$q^6-1$. Since $q^6-1 < M^6-1$, it follows that $u \le 177$.
For any non-negative integer $r \le u$, let $m$ be the product of the first $u-r$ prime factors of
$q^6-1$ and let $\{p_1,\ldots , p_r\}$ be the other prime factors of $q^6-1$.
Let $S_{u,r}$ be the sum of the reciprocals of the first $r$ prime numbers starting from the
$(u-r+1)$th prime, and 
define $\Delta_{u,r}= \frac{r-1}{1-S_{u,r}}+2$.
For the primes $p_1, \ldots , p_r$, let
$\delta$, $m$ and $\Delta$ be the numbers given in Proposition \ref{crivo}.
If $S_{u,r}<1$,
we get
\begin{equation}\label{cota}
5 W(m) \Delta \le 5 \cdot 2^{u-r} \cdot \Delta_{u,r}.
\end{equation}
Let us define
\[
N = \max_{1 \le u \le 177} \{\min_{0 \le r \le u} \{(5 \cdot 2^{u-r} \cdot \Delta_{u,r} )^2\}\}.
\]
From Proposition \ref{crivo} and Inequality \eqref{cota}, we get that
if $q \ge N$ there exists a primitive $3$-normal element in $\mathbb{F}_{q^6}$ over $\mathbb{F}_q$.
Using SageMath, from $M=6.46 \cdot 10^{73}$ we obtain $N<1.58 \cdot 10^{11}$.

If we repeat this process several times we get the following. From $M= 1.58 \cdot 10^{11}$, we get
$u \le 39$ and $N<1.10 \cdot 10^{8}$.
From $M= 1.10 \cdot 10^{8}$, we get
$u \le 30$ and $N<3.31 \cdot 10^{7}$.
Finally, from $M= 3.31 \cdot 10^{7}$, we get
$u \le 29$ and $N<2.79 \cdot 10^{7}$.

There are $1,736,412$ prime powers less than $2.79 \cdot 10^{7}$ that are either odd prime powers or powers of $2$ with an even exponent.
Using SageMath, we apply Algorithm \ref{sieve} to these prime powers to verify whether Proposition \ref{crivo} can be used to ensure the existence of primitive $3$-normal elements in $\mathbb{F}_{q^6}$ over $\mathbb{F}_q$.
We conclude that for $9,551$ of these prime powers,
the inequality $q^{1/2} \ge 5 W(m) \Delta$ fails for
all factorization of the form $\operatorname{rad}(q^6-1)=\operatorname{rad}(m)\cdot p_1 \cdots p_r$.

To prove the existence of a primitive $3$-normal element in $\mathbb{F}_{q^6}$ over $\mathbb{F}_q$ for these $9,551$ prime powers, we apply Algorithm \ref{explicit}, which is based on Theorem \ref{teo-principal}. For these cases, Algorithm \ref{explicit}
finds a primitive $3$-normal element in $\mathbb{F}_{q^6}$ over $\mathbb{F}_q$
of the form $L_f(\alpha)+\delta$ except for $q=7$, where $\delta \in \mathbb{F}_q$,
$\alpha$ is a normal element in $\mathbb{F}_{q^6}$ over $\mathbb{F}_q$,
and the polynomial $f$ is chosen as follows.

Let $a$ be a primitive element of $\mathbb{F}_{q^6}$, and let $u = a^{(q^6-1)/(q-1)}$ be a primitive element of $\mathbb{F}_q$. If $q = 2^{2m}$ or $q \equiv 1 \pmod{6}$, then $3 \mid (q-1)$, and we set $b = u^{(q-1)/3}$.
In both cases, we have the factorization:
\[
x^6-1 = (x-1)(x+1)(x-b)(x+b+1)(x+b)(x-b-1)
\]
and we choose $f = (x-b)(x+b+1)(x+b) = x^3 + (b+1)x^2 + (b+1)x + b$. Then $g = (x^6-1)/f$ is a polynomial that satisfies (A).
If $q$ is a power of $3$ or $q \equiv 5 \pmod{6}$, then we choose $f = (x+1)(x^2+x+1) = x^3 + 2x^2 + 2x + 1$, and $g = (x^6-1)/f$ is a polynomial that satisfies (A).

For $q=7$, we have $x^6-1=(x - 1) (x + 1)  (x - 2)  (x + 2)  (x - 3)  (x + 3)$
and
any root $\alpha$ of $x^6 + 2x^5 + 2x^4 + 3x^3 + 4x^2 + 2x + 2$ is normal
in $\mathbb{F}_{7^6}$ over $\mathbb{F}_7$. If
$f=(x - 2)(x - 3)(x - 4)$ and $\beta = L_f(\alpha)$, we get that 
$\beta+1$ is a primitive $3$-normal element in $\mathbb{F}_{7^6}$ over $\mathbb{F}_7$.
\end{proof}

\begin{remark}
From the proof of Theorem \ref{principaln6}, it is guaranteed that if $\alpha$ is normal, $f$
is a monic polynomial dividing $x^6-1$, $g = (x^6-1)/f$ satisfies  (A) in $\mathbb{F}_q[x]$, and $q \geq 2.79 \cdot 10^{7}$, then there exists an element $L_f(\alpha) + u$ that is primitive $3$-normal in $\mathbb{F}_{q^6}$ over $\mathbb{F}_q$ for some $u \in \mathbb{F}_q$. Now, also from Theorem \ref{principaln6}, if $q < 2.79 \cdot 10^{7}$, this does not necessarily hold for all monic polynomials $f \mid (x^6-1)$, but there is at least one such polynomial $f$ for which we obtain a primitive $3$-normal element.
\end{remark}

\section*{Appendix: Algorithms in SageMath} \label{apendice}
{\scriptsize
\begin{algorithm}
\KwIn{A prime power $q$}
\KwOut{True or False}
LPrime $\gets$ List of the prime divisors of $q^6 - 1$ in increasing order \\
$u \gets$ Length(Lprime)\\
Value $\gets$ False\\
$r \gets 0$\\
\While{$r \le u$ and Value $=$ False}
    {
    $S\gets$ List of the last $r$ elements of Lprime\\
    $\delta \gets 1- \displaystyle\sum_{p\in S} \frac{1}{p}$\\
    \If{$\delta >0$}
        {
        $\Delta \gets \frac{r-1}{\delta}+2$\\
        $N \gets (5\cdot 2^{u-r} \cdot \Delta)^2$\\
        \If{$q\ge N$}
            {
            Value $\gets$ True
            }
        }
    $r\gets r+1$
    }
\Return Value
\caption{Sieve($q$)}\label{sieve}
\end{algorithm}
}

{\scriptsize
\begin{algorithm}
\KwIn{A prime power $q \neq 2^{2m+1}$}
\KwOut{A primitive $3$-normal element of the extension $\mathbb{F}_{q^6}\mid \mathbb{F}_q$ or False}
$a \gets$ A primitive element of $\mathbb{F}_{q^6}$\\
$\alpha \gets$ A normal element of $\mathbb{F}_{q^6}$ over $\mathbb{F}_{q}$\\
$\gamma \gets$ False\\
\uIf{$q=2^{2m}$ or $q \equiv 1 \pmod 6$}
     {
     $b \gets a^{\frac{q^6-1}{3}}$\\
     $f \gets x^3 + (b+1)x^2 + (b+1)x + b$\\
     $\beta \gets L_f(\alpha)$
     }
\Else{
     $f \gets x^3 + 2x^2 + 2x + 1$\\
     $\beta \gets L_f(\alpha)$
     }
NotEnd $\gets$ True \\
\uIf{$\ord(\beta)=q^6-1$}
    {
     NotEnd $\gets$ False \\
     $\gamma \gets \beta$
    }
\Else{
    $u \gets a^{\frac{q^6-1}{q-1}}$\\
    $j \gets 0$\\
    \While{$j<q-1$ and NotEnd}
          {\If{$\ord(\beta+u^j)=q^6-1$}
               {
               $p \gets \displaystyle\sum_{i=0}^{5} (\beta+u^j)^{q^i}x^{5-i}$\\
               $d \gets \deg (\gcd(p,x^6-1))$\\
               \If{d=3}
                  {NotEnd $\gets$ False \\
                   $\gamma \gets \beta+u^j$}
               }
          $j \gets j+1$
          }
     }
\Return $\gamma$
\caption{Explicit-element($q$)}\label{explicit}
\end{algorithm}
}

\end{document}